\definecolor{darkblue}{rgb}{0.0, 0.0, 0.55}
\newtheorem{theorem}{Theorem}[section]
\newtheorem{cor}[theorem]{Corollary}
\newtheorem{lemma}[theorem]{Lemma}
\newtheorem{prop}[theorem]{Proposition}
\def\C{\mathbb {C}}
\def\cS{\mathcal S}
\newcommand{\df}[1]{{\bf{#1}}{\index{#1}}}
\title{Compact Sets in the Free Topology}
\author[M. Augat]{Meric Augat}
\address{Meric Augat, Department of Mathematics\\
  University of Florida\\ Gainesville
   }
   \email{mlaugat@ufl.edu}
\author[S. Balasubramanian]{Sriram Balasubramanian${}^1$}
\address{Sriram Balasubramanian, Department of Mathematics \\ IIT Madras \\ Chennai - 600036, India}
\email{bsriram@iitm.ac.in}
\thanks{${}^1$Supported by the New Faculty Initiative Grant (MAT/15-16/836/NFIG/SRIM) of IIT Madras.}
\author[S. McCullough]{Scott McCullough${}^2$}
\address{Scott McCullough, Department of Mathematics\\
  University of Florida\\ Gainesville %\\
   % Box 118105\\
   %  Gainesville, FL 32611-8105\\
   %  USA
   }
   \email{sam@math.ufl.edu}
\thanks{${}^2$Research supported by the NSF grant DMS-1361501}
\subjclass[2010]{47L07, 52A05 (Primary); 46N10, 46L07, 32F17 (Secondary)}
\begin{document}

\begin{abstract}
Subsets of the set of $g$-tuples of matrices that are closed with respect to direct sums and compact in the free topology are characterized.  They are, in a dilation theoretic sense, contained in the hull of a single point. 
\end{abstract}

\maketitle

\section{Introduction}
Given positive integers $n,g$, let $M_n(\C)^g$ denote the set of $g$-tuples of $n\times n$ matrices.
Let $M(\C)^g$ denote the sequence $(M_n(\C)^g)_n$. A subset $E$ of $M(\C)^g$ is a sequence $(E(n))$ where $E(n)\subset M_n(\C)^g.$    The free topology \cite{AM14} has as a basis free sets of the form  $G_\delta = (G_\delta(n))$, where 
\[
 G_\delta(n) =\{X\in M_n(\C)^g: \| \delta(X) \| <1\},
\]
and $\delta$ is a (matrix-valued) free polynomial.  Agler and McCarthy \cite{AM14} prove the remarkable result that a bounded  free 
function on a basis set $G_\delta$ is uniformly approximable by polynomials on each smaller set of the form
\[
 K_{s\delta}=\{X\in M(\C)^g: \|\delta(X)\|\le s\}, \ \ 0\le s<1.
\]
%The best prior results required $G_\delta$ to be (a translate of) a ball.
For the definitive treatment of free function theory, see \cite{KVV}. 

 Sets $E\subset M(\C)^g$ naturally arising in free analysis (\cite{AM15, BMV, BKP16, HKN14, KV, KS, Pas, Voi10} is a sampling of the references) are typically closed with respect to direct sums  in the sense that if $X\in E(n)$ and $Y\in E(m)$, then
\[
  X\oplus Y = \left ( \begin{pmatrix} X_1 & 0\\ 0 & Y_1 \end{pmatrix}, \ldots, \begin{pmatrix} X_g & 0 \\ 0 & Y_g\end{pmatrix} \right ) \in E(n+m).
\]

Theorem \ref{thm:freecompact} below, characterizing free topology compact sets that are closed with respect to directs sums,  is the main result of this article. A tuple $Y\in M_n(\C)^g$ \df{polynomially dilates} to a tuple $X\in M_N(\C)^g$ if there is an isometry $V:\C^n\to \C^N$ such that for all free polynomials $p$,
\[
 p(Y)=V^* p(X)V.
\]
An \df{ampliation} of $X$ is a tuple of the form $I_k\otimes X$, for some positive integer $k$.
The \df{dilation hull} of $X\in M(\C)^g$ is the set of all $Y \in M(\C)^g$ that dilate to an ampliation of $X$.

\begin{theorem}
 \label{thm:freecompact}
 A subset $E$ of $M(\C)^g$ that is closed with respect to direct sums  is compact if and only if it is contained in the polynomial dilation hull of an $X\in E.$ 
\end{theorem}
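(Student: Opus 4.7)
The plan is to prove the two directions separately.

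For $(\Leftarrow)$, suppose $E$ lies in the polynomial dilation hull of some $X\in E$. If $Y\in E$ dilates to $I_k\otimes X$ via an isometry $V$, then for any matrix-valued free polynomial $\delta$ of size $r$, the entrywise identity $\delta_{ij}(Y)=V^*(I_k\otimes\delta_{ij}(X))V$ assembles into $\delta(Y)=(I_r\otimes V)^*(I_k\otimes\delta(X))(I_r\otimes V)$, giving $\|\delta(Y)\|\le\|\delta(X)\|$. Given any cover of $E$ by basic free sets $\{G_{\delta_\alpha}\}$, the point $X$ lies in some $G_{\delta_{\alpha_0}}$, so $\|\delta_{\alpha_0}(X)\|<1$, and the inequality forces $\|\delta_{\alpha_0}(Y)\|<1$ for every $Y\in E$; this single element of the cover already suffices.

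For $(\Rightarrow)$, the idea is to use compactness together with closure under direct sums to produce a single $X\in E$ that polynomially dominates every $Y\in E$, and then to appeal to a matrix-range dilation theorem. For each matrix-valued free polynomial $p$, compactness applied to the cover $E\subset\bigcup_{n\ge1}G_{p/n}$ yields a uniform bound $M_p:=\sup_{Y\in E}\|p(Y)\|<\infty$. Let $F_p:=\{Y\in E:\|p(Y)\|\ge M_p\}$; each $F_p$ is closed in $E$ (equal to $E$ when $M_p=0$, and to the complement of $G_{p/M_p}$ otherwise), and it is nonempty because the descending family $\{Y\in E:\|p(Y)\|\ge M_p-1/n\}$ consists of nonempty closed sets, whose intersection is nonempty by compactness. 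The crux is that $\{F_p\}$ itself has the finite intersection property: given $p_1,\dots,p_\ell$ and $Y_j\in F_{p_j}$, the direct sum $Z=Y_1\oplus\cdots\oplus Y_\ell$ lies in $E$, and since $p(Z)$ is unitarily equivalent to the block-diagonal operator with diagonal blocks $p(Y_1),\dots,p(Y_\ell)$ for every matrix polynomial $p$, one obtains $\|p_i(Z)\|=\max_j\|p_i(Y_j)\|\ge M_{p_i}$; the opposite inequality is automatic. Compactness then produces $X\in\bigcap_p F_p$, and this $X$ satisfies $\|p(X)\|=M_p\ge\|p(Y)\|$ for every $Y\in E$ and every matrix-valued free polynomial $p$.

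The remaining, and principal, obstacle is to upgrade polynomial domination to honest polynomial dilation of each $Y\in E$ to an ampliation of $X$. This is an Arveson--Stinespring style statement: the matrix-polynomial norm inequality makes the unital homomorphism $p(X)\mapsto p(Y)$ between the finite-dimensional operator algebras generated by $X$ and $Y$ completely contractive, hence, via Arveson's extension theorem, extendable to a unital completely positive map on all of $M_N(\C)\supset C^*(X)$; Stinespring then realizes this extension in the form $A\mapsto V^*(I_k\otimes A)V$, and restricting to polynomials in $X$ recovers the required dilation. I would either invoke this characterization as a known matrix-range result or prove it as a separate preliminary lemma, since this is precisely the step where the matrix-valued nature of $\delta$ in the basis for the free topology is essential.
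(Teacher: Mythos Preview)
Your proof is correct and, for the $(\Leftarrow)$ direction, essentially identical to the paper's: both observe that a single basic neighborhood $G_\delta$ of $X$ already swallows all of $E$ once you know $\|\delta(Y)\|\le\|\delta(X)\|$ for every $Y$. The dilation lemma you sketch at the end (completely contractive homomorphism $\Rightarrow$ Arveson extension $\Rightarrow$ Stinespring/Choi form) is precisely what the paper isolates as a preliminary lemma and proves the same way.

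For $(\Rightarrow)$ you take a genuinely different route. The paper argues the contrapositive: if no $X\in E$ dominates every $Y$, then for each $X$ one picks $\delta_X$ and $Y_X$ with $\|\delta_X(X)\|<1<\|\delta_X(Y_X)\|$; the basic sets $G_{\delta_X}$ form an open cover of $E$, and for any finite subfamily the direct sum of the corresponding $Y_X$'s escapes every member, so the cover has no finite subcover. You instead use compactness \emph{positively} through the finite intersection property: for each matrix polynomial $p$ you exhibit the closed attaining set $F_p=\{\,Y\in E:\|p(Y)\|=M_p\,\}$, use direct sums to show the family $\{F_p\}$ has the FIP, and extract $X\in\bigcap_p F_p$. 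The paper's argument is a touch shorter and makes the obstruction to compactness completely explicit; your argument has the virtue of locating the maximizing tuple $X$ directly rather than by contradiction, and it makes transparent that the direct-sum hypothesis is used exactly once, to merge finitely many maximizers into a common one.
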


\begin{cor}
 If $E\subset M(\C)^g$ is closed with respect to direct sums and is compact in the free topology, then there exists a non-zero free polynomial $p$ such that $E$ is a subset of the zero set of $p$; i.e., $p(Y)=0$ for all $Y\in E$. In particular, there is an $N$ such that for $n\ge N$ the set $E(n)$  has empty interior.
\end{cor}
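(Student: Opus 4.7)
The plan is to invoke Theorem \ref{thm:freecompact} to place $E$ inside the polynomial dilation hull of a fixed tuple $X\in E$, say $X\in M_n(\C)^g$ for some finite $n$, and then exploit the fact that $X$ lives at a single matrix level to produce a free polynomial that annihilates it. Since the algebra of free polynomials in $g$ noncommuting variables is infinite dimensional while $M_n(\C)$ has dimension $n^2$, the evaluation map $p\mapsto p(X)$ from $\C\langle x_1,\ldots,x_g\rangle$ to $M_n(\C)$ has a nontrivial kernel. I would take any nonzero $p$ in that kernel as the candidate polynomial.

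To check that $p$ vanishes on all of $E$, fix $Y\in E(m)$ and use Theorem \ref{thm:freecompact} to obtain an isometry $V:\C^m\to\C^{kn}$ realizing $Y$ as a compression of the ampliation $I_k\otimes X$, so that $p(Y)=V^*p(I_k\otimes X)V$. Since ampliation commutes with polynomial evaluation, $p(I_k\otimes X)=I_k\otimes p(X)=0$, which forces $p(Y)=0$. This establishes the first assertion.

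For the empty-interior conclusion, the step requiring the most care is passing from the noncommutative polynomial $p$ to its evaluation as a function on $M_n(\C)^g$: one must verify that, for $n$ large enough, $p$ is genuinely nonzero as a polynomial function on $\C^{gn^2}\cong M_n(\C)^g$, not just nonzero in the free algebra. This is exactly a statement about polynomial identities, and the Amitsur--Levitzki theorem (every polynomial identity of $M_n(\C)$ has degree at least $2n$) yields an explicit $N$ beyond which $p$ cannot be an identity. For such $n$, the zero set of $p$ is a proper algebraic subvariety of $\C^{gn^2}$ and hence has empty Euclidean interior; since $E(n)$ is contained in this zero set, $E(n)$ has empty interior as well.
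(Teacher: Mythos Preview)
Your proof is correct and follows essentially the same route as the paper's: apply Theorem~\ref{thm:freecompact} to obtain a single $X\in E(n)$, use finite-dimensionality of $M_n(\C)$ to find a nonzero scalar free polynomial annihilating $X$, transfer this to every $Y\in E$ via the dilation relation, and then invoke Amitsur--Levitzki for the empty-interior claim. The paper is terser where you have spelled out the compression computation $p(Y)=V^*(I_k\otimes p(X))V=0$ and the polynomial-identity reasoning, but the ideas are identical.
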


\begin{proof}
 By Theorem \ref{thm:freecompact}, there is an $n$ and $X\in E(n)$ such that each $Y\in E$ polynomially dilates to an ampliation of $X$. Choose a nonzero scalar free polynomial $p$ such that $p(X)=0$ (using the fact that the span of $\{w(X): \mbox{$w$ is a word}\}$ is a subset of the finite dimensional vector space $M_n(\C)$). It follows that $p(Y)=0$ for all $Y$. Hence $E$ is a subset of the zero set of $p$. It is well known (see for instance the Amistur-Levitzki Theorem \cite{Row}) that the zero set  $p$ in $M_n(\C)^g$  must have empty interior for sufficiently large $n$. 
\end{proof}

The authors thank Igor Klep for a fruitful correspondence which led to this article. 
The proof of Theorem \ref{thm:freecompact} occupies the remainder of this article.

\section{The proof of Theorem~\ref{thm:freecompact}}

\begin{prop}
\label{prop:igor}
  Suppose $E\subset M(\C)^g$ is nonempty and  closed with respect to direct sums.  If for each $X\in E$ there is a matrix-valued free polynomial $\delta$ and a $Y\in E$ such that
\[
 \|\delta(X)\|< \|\delta(Y)\|,
\]
 then $E$ is not compact in the free topology.
\end{prop}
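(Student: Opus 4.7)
\medskip

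\noindent\textbf{Proof plan.} The plan is to negate compactness directly by exhibiting an open cover of $E$ in the free topology that admits no finite subcover. The hypothesis provides, for each $X\in E$, a matrix-valued free polynomial that separates $X$ from some $Y\in E$ (in norm); this will give a natural family of basic open neighborhoods $U_X$. Closure under direct sums will then be used to build, from any candidate finite subcover, a single element of $E$ that lies in none of the chosen neighborhoods.

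Concretely, for each $X\in E$ I would first invoke the hypothesis to produce a matrix-valued free polynomial $\delta_X$ and a $Y_X\in E$ with $\|\delta_X(X)\|<\|\delta_X(Y_X)\|$. Choosing a scalar $s_X$ strictly between these two norms and replacing $\delta_X$ by $\delta_X/s_X$ (still a matrix-valued free polynomial), I arrange
\[
  \|\delta_X(X)\|<1<\|\delta_X(Y_X)\|.
\]
Then $U_X:=G_{\delta_X}$ is a basic open set in the free topology with $X\in U_X$ and $Y_X\notin U_X$, so the family $\{U_X:X\in E\}$ covers $E$.

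The crux is to show no finite subfamily covers $E$. Given an arbitrary finite subcollection $U_{X_1},\dots,U_{X_k}$, closure of $E$ under direct sums allows me to form
\[
  V := Y_{X_1}\oplus Y_{X_2}\oplus\cdots\oplus Y_{X_k}\in E.
\]
The key computation is that for any matrix-valued free polynomial $\delta$, the tuple $\delta(A\oplus B)$ is unitarily equivalent to $\delta(A)\oplus\delta(B)$, since each monomial satisfies $w(A\oplus B)=w(A)\oplus w(B)$ and tensoring with a matrix coefficient is compatible (up to a canonical shuffle) with direct sums. Consequently
\[
  \|\delta_{X_i}(V)\|=\max_{1\le j\le k}\|\delta_{X_i}(Y_{X_j})\|\ge \|\delta_{X_i}(Y_{X_i})\|>1,
\]
so $V\notin U_{X_i}$ for every $i$, contradicting the assumption that the $U_{X_i}$ cover $E$. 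Hence no finite subcover exists and $E$ is not compact.

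I expect the only genuine subtlety to be the norm identity for matrix-valued polynomials applied to direct sums; everything else is a straightforward packaging of the hypothesis into an open cover, with direct sum closure doing the real combinatorial work of defeating finiteness. The rescaling step is essentially cosmetic (needed because the basic free-topology open sets use the threshold $1$), so the proof reduces to the one clean contradiction above.
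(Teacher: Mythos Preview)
Your proposal is correct and is essentially identical to the paper's own proof: build the open cover $\{G_{\delta_X}\}_{X\in E}$ after rescaling so that $\|\delta_X(X)\|<1<\|\delta_X(Y_X)\|$, then defeat any finite subcover by forming the direct sum of the corresponding $Y_X$'s and using $\|\delta(A\oplus B)\|=\max(\|\delta(A)\|,\|\delta(B)\|)$. The paper leaves the rescaling and the direct-sum norm identity implicit, whereas you spell them out, but the argument is the same.
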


\begin{proof}
By hypothesis, for each $X \in E$  there is a matrix-valued polynomial $\delta_X$ and {$Y_X \in E$} such that $\|\delta_X(X)\|< 1<\|\delta_X(Y_X)\|$.  The collection $\mathcal G=\{G_{\delta_X}: X\in E\}$ is an open cover of $E.$  Suppose $S\subset E$ is a finite.  Observe that for each $X\in S$, $Y_X \in E \setminus G_{{\delta_X}}.$ Since $E$ is closed with respect to direct sums, $Z=\oplus_{X\in S}  Y_X \in E$. On the other hand, for a fixed $W\in S$,
\[
 \|\delta_W(Z)\| \ge \| \delta_W(Y_W)\| > 1. %> \|\delta_W(W))\|.
\]
 Thus $Z\notin G_{\delta_W}$ and therefore $Z\in E$  but $Z\notin \cup_{X\in S} G_{{\delta_X}}.$ 
Thus $\mathcal G$ admits no finite subcover of $E$ and therefore $E$ is not compact.
\end{proof}

The following lemma is a standard result. 

\begin{lemma}
\label{lem:dilation}
 Suppose $X,Y\in M(\C)^g$. The tuple $Y$ polynomially dilates to an ampliation of $X$ if and only  if
\[
 \|\delta(Y)\|\le \|\delta(X)\|
\]
 for every free matrix-valued polynomial $\delta.$
\end{lemma}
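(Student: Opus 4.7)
The plan is to prove the forward implication by a direct calculation, and the reverse implication via the standard Paulsen--Arveson--Stinespring machine.

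For the forward direction, I would start with an isometry $V:\C^n\to\C^{kN}$ witnessing a polynomial dilation of $Y$ to $I_k\otimes X$. The key observation is that for any matrix-valued free polynomial $\delta=\sum_{w}A_w\otimes w$, the operator $\delta(I_k\otimes X)=\sum_w A_w\otimes I_k\otimes w(X)$ is unitarily equivalent (via a flip of tensor factors) to $I_k\otimes\delta(X)$, hence has norm equal to $\|\delta(X)\|$. Tensoring the scalar dilation identity by the identity on the coefficient space yields
\[
 \delta(Y)=(I_r\otimes V)^*\,\delta(I_k\otimes X)\,(I_r\otimes V),
\]
and the norm inequality follows because $I_r\otimes V$ is an isometry.

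For the reverse direction, with $X\in M_N(\C)^g$ and $Y\in M_n(\C)^g$, I would let $\mathcal A$ denote the unital subalgebra of $M_N(\C)$ generated by $X_1,\ldots,X_g$ and define a unital linear map $\phi:\mathcal A\to M_n(\C)$ by $\phi(p(X))=p(Y)$. The scalar case of the hypothesis makes $\phi$ well-defined (if $p(X)=0$, then $\|p(Y)\|\le\|p(X)\|=0$), and the matrix-valued case says exactly that each amplification $\phi_r$ is contractive, so $\phi$ is unital completely contractive. Then Paulsen's off-diagonal trick extends $\phi$ to a UCP map on the operator system $\mathcal A+\mathcal A^*$; Arveson's extension theorem extends that further to a UCP map $\Phi:M_N(\C)\to M_n(\C)$; and Stinespring's theorem represents $\Phi$ as $\Phi(a)=V^*\pi(a)V$ for some $*$-representation $\pi$ of $M_N(\C)$ and isometry $V$. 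Because $M_N(\C)$ has a unique irreducible $*$-representation up to unitary equivalence, we may take $\pi(a)=I_k\otimes a$ for some $k$, yielding $p(Y)=V^*(I_k\otimes p(X))V$ for every free polynomial $p$, which is the asserted polynomial dilation to an ampliation of $X$.

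The argument is a chain of standard results, so I do not anticipate any genuine obstacle; what needs to be kept honest is that the norm inequality at the scalar level gives well-definedness of $\phi$, that complete contractivity at every matrix level is precisely the full matrix-valued hypothesis, and that the unitality of $\phi$ is what makes the Stinespring witness $V$ an isometry rather than a general contraction.
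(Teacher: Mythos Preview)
Your proof is correct and follows essentially the same route as the paper's. The only difference is packaging: where you unpack the extension into Paulsen's off-diagonal trick, Arveson extension, and Stinespring together with the classification of $*$-representations of $M_N(\C)$, the paper cites a single result (Corollary~7.6 of Paulsen's book) for the completely positive extension and then applies Choi's theorem to write the extended map in Kraus form $\varphi(T)=\sum_j W_j^*TW_j$, which directly exhibits the ampliation.
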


\begin{proof}
Let $\mathcal P$ denote the set of scalar free polynomials in $g$ variables. Given a tuple $Z\in M_n(\C)^g$, let $\cS(Z)=\{p(Z): p\in\mathcal P\}\subset M_n(\C)$. The set $\cS(Z)$ is a unital operator algebra.   Let $m$ and $n$ denote the sizes of $Y$ and $X$ respectively. The hypotheses thus imply that the unital homomorphism $\lambda:\cS(X)\to \cS(Y)$ given by $\lambda(p(X)) = p(Y)$ is well defined and completely contractive. Thus by Corollary 7.6 of \cite{P}, it follows that there exists a completely positive map $\varphi:M_n(\C)\to M_m(\C)$  extending  $\lambda$. 
By Choi's Theorem \cite{P}, there exists an $M$ and, for $1\le j\le M$, mappings $W_j:\C^{m}\to \C^n$ such that $\sum W_j^* W_j=I$ and 
 \[
   \varphi(T)=\sum W_j^* T W_j.
 \]  
Let $W$ denote the column matrix with entries $W_i$. With this notation, $\varphi(T)=W^* (I_M\otimes T)W$. In particular, $W$ is an isometry,
since $I=\varphi(I)=W^*W$. Moreover, for
polynomials $p,$
\[
 p(Y)=\varphi(p(X))= W^* (I_M\otimes p(X))W
\]
 and the proof of the reverse direction is complete.

To prove the converse, suppose there is a $N$ and an isometry $V$ such that for all free scalar polynomials $p$,   
\[
  p(Y)= V^* p(I_N\otimes X) V = V^* [I_N\otimes p(X)] V.
\]
 Thus for all matrix free polynomials $\delta$, say of size $d\times d$ (without loss of generality $\delta$ can be assumed square),
\[
 \delta(Y) =  [V \otimes I_d]^* [I_N\otimes \delta(X)]  [V\otimes I_d].
\]
 It follows that $\|\delta(Y)\|\le \|\delta(X)\|.$ %and thus if $X$ is in  $G_\delta$, then so is $Y$.
\end{proof}

%\begin{prop}
%\label{prop:finiteScompact}
% Suppose $E\subset M(\C)^g$. If there is a finite subset $S\subset E$ such that for each $Y\in E$ there is an  {$X\in S$} such that $Y$ dilates to an ampliation of $X$, then $E$ is compact. 
%\end{prop}

\begin{proof}[Proof of Theorem~\ref{thm:freecompact}]
 If for each $X\in E$ there is a $Y\in E$ that does not polynomially dilate to an ampliation of $X$, then, by Lemma \ref{lem:dilation},  for each $X\in E$ there is a $Y\in E$ and a  matrix-valued polynomial $\delta_X$ such that $\|\delta_X(X)\|< \|\delta_X(Y)\|$. An application of Proposition \ref{prop:igor} shows $E$ is not compact.

To prove the converse, suppose there exists $X \in E$ such that every $Y \in E$ polynomially dilates to an ampliation of $X$. Let $\mathcal G$ be an open cover of $E$. There is a $G \in \mathcal G$ and a matrix valued free polynomial $\delta$ such that $X \in G_\delta \subset G$. Since $Y$ polynomially dilates to an ampliation of $X$, it follows that $\|\delta(Y)\|\le \|\delta(X)\|<1$. Hence $Y\in G_\delta \subset G$ and therefore $E\subset G.$
\end{proof}


\begin{thebibliography}{99}
%\itemsep=-2pt
{\tiny
\bibitem[AM14]{AM14}
J. Agler, J. McCarthy:
{\it Global holomorphic functions in several non-commuting variables}, {Canad. J. Math.} 67 (2015) 241--285.

\bibitem[AM15]{AM15}
J. Agler, J. McCarthy:
\textit{Pick interpolation for free holomorphic functions},
Amer. J. Math. 137 (2015) 1685--1701. 


\bibitem[BMV]{BMV} 
J.A. Ball, G. Marx, V. Vinnikov:
{\it Interpolation and transfer-function realization for the noncommutative Schur-Agler class},
preprint \url{http://arxiv.org/abs/1602.00762}

\bibitem[BKP16]{BKP16}  Sabine Burgdorf, Igor Klep and Janez Povh:
Optimization of polynomials in non-commuting variables, SpringerBriefs
in Mathematics, Springer-Verlag, 2016.

% \bibitem[HKM16]{HKM16} J.W. Helton, I. Klep, S. McCullough: 
% {\it Matrix convex hulls of free semialgebraic sets}, Trans. Amer. Math. Soc. 368 (2016), no. 5, 3105--3139.

% \bibitem[HKM12]{HKM12} J.W. Helton, I. Klep, S. McCullough: 
% {\it The convex Positivstellensatz in a free algebra}, Adv. Math. 231 (2012), no. 1, 516--534.

%\bibitem[HM12] J.W. Helton, S. McCullough:
% {\it Every convex free basic semi-algebraic set has an LMI representation},  Ann. of Math. (2) 176 (2012), no. 2, 979--1013.

\bibitem[HKN14]{HKN14} J. William Helton, Igor Klep, Christopher S. Nelson:
Noncommutative polynomials nonnegative on a variety intersect a convex
set, J. Funct. Anal., 2014, vol. 266, pp. 6684-6752.

\bibitem[KVV14]{KVV}
D. Kaliuzhnyi-Verbovetskyi, V. Vinnikov:
{\it Foundations of Free Noncommutative Function Theory,}
Mathematical Surveys and Monographs {199}, AMS, 
2014.

%\bibitem[KMN]{KMN} I. Klep, S. McCullough, C. Nelson: 
% {\it On trace-convex noncommutative polynomials},  Michigan Math. J. 65 (2016), no. 1, 131--146.



\bibitem[KV]{KV} Igor Klep, Jurij Volcic:
Free loci of matrix pencils and domains of noncommutative rational
functions, preprint
\url{http://arxiv.org/abs/1512.02648}


\bibitem[K\v S]{KS}
 I. Klep, \v S. \v Spenko: 
 \textit{Free function theory through matrix invariants}, to appear in Canad. J.
Math.

\bibitem[Pas14]{Pas}
J.E. Pascoe:
\textit{The inverse function theorem and the Jacobian conjecture for free analysis},
{Math. Z.} 278 (2014) 987--994.

\bibitem[Pau02]{P}
V. Paulsen:
{\it Completely bounded maps and operator algebras},
Cambridge Univ. Press, 2002.

\bibitem[Row80]{Row} L.H. Rowen: {\it Polynomials identities in ring theory,} Academic Press, New York, 1980.
 
\bibitem[Voi10]{Voi10} 
%\bibitem{Voi10} 
D.-V. Voiculescu:
{\it Free analysis questions II: The Grassmannian completion and the series
expansions at the origin}, {J. reine angew. Math.} {645} (2010) 155--236.
}
\end{thebibliography}
\end{document}